\theoremstyle{plain}
\newtheorem {thm}{Theorem}[section]
\newtheorem {lem}[thm]{Lemma}
\theoremstyle{definition}
\title{The sign-sequence constant of the plane  \footnotetext{Work on this paper by both authors was supported by ERC Grant 267165 DISCONV. Thanks to Imre B\'ar\'any for suggesting the problem, and to Jesus Jeronimo and Steven Karp for comments on an earlier draft.}}
\author{
Ben Lund \thanks{
Rutgers University, USA.
{\sl lund.ben@gmail.com}}
\and
Alexander Magazinov \thanks{%
Tel Aviv University, %
Israel.
{\sl magazinov@post.tau.ac.il}
}}
\begin{document}

\maketitle

\begin{abstract}
Let $L$ be a finite-dimensional real normed space, and let $B$ be the unit ball in $L$.
The sign sequence constant of $L$ is the least $t>0$ such that, for each sequence $v_1, \ldots, v_n \in B$, there are signs $\varepsilon_1, \ldots, \varepsilon_n \in \{-1, +1\}$ such that $\varepsilon_1 v_1 + \ldots + \varepsilon_k v_k \in t B$, for each $1 \leq k \leq n$.

We show that the sign sequence constant of a plane is at most $2$, and the sign sequence constant of the plane with the Euclidean norm is equal to $\sqrt{3}$.
\end{abstract}
\section{Introduction}

Throughout this note, $L$ denotes a finite-dimensional real normed space, and $B$ denotes the unit ball in $L$.

Suppose we are given a sequence of vectors $v_1, v_2, \ldots, v_n \in B$, and we want to find signs $\varepsilon_1, \varepsilon_2, \ldots, \varepsilon_n \in \{-1, +1\}$, such that the norm of each signed partial sum $\varepsilon_1 v_1 + \ldots + \varepsilon_k v_k$ is bounded by a constant depending only on $L$.
We term the smallest constant for which this is always possible the sign-sequence constant, and denote it by $SS(L)$.

The question of determining the sign-sequence constant is closely related to the analogous question of determining the best possible constant in Steinitz's Lemma.
Suppose we are given a finite set $V \subset B$ with $\sum_{v \in V} = 0$, and we want to find an ordering $v_1, \ldots, v_n$ of the elements of $V$ such that the norm of each partial sum $v_1 + \ldots + v_k$ is bounded by a constant depending only on $L$.
We term the smallest constant for which is always possible the Steinitz constant, and denote it by $S(L)$.
The fact that $S(L)$ exists for an arbitrary norm was first proved by Steinitz \cite{steinitz1913bedingt}, answering a question of Riemann and L\'evy, and is usually called Steinitz's Lemma.

In the general case, the results on the sign-sequence constant and the methods used to prove them are are similar to those for the Steinitz constant.
Grinberg and Sevast'yanov \cite{grinberg1980value} showed that $S(L) \leq d$, and B\'ar\'any and Grinberg \cite{barany1981some} used a similar argument to show that $SS(L) \leq 2d-1$.
Chobanyan \cite{chobanyan1994convergence} showed that $S(L) \leq SS(L)$, so an upper bound on $SS(L)$ immediately implies the same bound for $S(L)$.
The linear upper bound on $S(L)$ is tight for the $l_1$ metric (up to a constant factor), as shown by the set consisting of the vectors $e_1, \ldots, e_d$ along with the vector $-(e_1 + \ldots + e_d)/d$ repeated $d$ times.
It is a notorious open problem to prove a sub-linear upper bound on $S(L)$ or $SS(L)$ for the Euclidean or max norms.
A good survey on these problems and results is \cite{barany2008power}.

It is known that the Steinitz constant for any real normed plane is bounded above by $3/2$, and the Steinitz constant for the Euclidean plane is equal to $\sqrt{5}/2$  -- see Bergstr\"om \cite{bergstrom1931zwei} and Banaszczyk \cite{banaszczyk1987steinitz}.
Swanepoel \cite{swanepoel2000balancing} showed that, for a sequence of unit vectors $v_1, v_2, \ldots, v_n$  in a real normed plane, there exist signs $\varepsilon_1, \varepsilon_2, \ldots, \varepsilon_n$,  such that the norm of $\varepsilon_1 v_1 + \ldots + \varepsilon_k v_k$ for even $k$ is at most $2$, and at most $\sqrt{2}$ for the Euclidean norm.
In this note, we prove the following bounds on the sign-sequence constant.

\begin{thm}\label{thm:main-arbitrary}
Let $L$ be a two-dimensional real normed space.
Then
$$SS(L) \leq 2.$$
\end{thm}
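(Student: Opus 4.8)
The plan is to recast the statement as a non‑emptiness claim about a family of ``reachable'' sets. Reading the vectors one at a time, let $T_k\subseteq 2B$ be the set of all partial sums $s_k=\varepsilon_1v_1+\cdots+\varepsilon_kv_k$ for which some choice of signs keeps every earlier partial sum inside $2B$; thus $T_0=\{0\}$, $T_k=\bigl((T_{k-1}+v_k)\cup(T_{k-1}-v_k)\bigr)\cap 2B$, and Theorem~\ref{thm:main-arbitrary} is exactly the assertion $T_n\neq\varnothing$. Two elementary facts organize the argument. First, $T_k=-T_k$, since negating all signs is allowed. Second, a point $p\in 2B$ has no valid continuation by $v$ precisely when $p$ lies in the obstruction set $Y_v=\{p\in 2B:\ \|p+v\|>2\text{ and }\|p-v\|>2\}$; but $Y_v\subseteq\{\,\|p\|>1\,\}$ (if $\|p\|\le1$ and $\|v\|\le1$ then $\|p\pm v\|\le 2$), and by convexity of $2B$ the set $Y_v$ is a union of a few small regions, occurring in antipodal pairs, each hugging a part of $\partial(2B)$ transverse to $v$ and staying away from the origin. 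So any low‑norm reachable point automatically survives, and the only way to get stuck at step $k+1$ is for the \emph{entire} set $T_k$ to lie in the thin antipodal caps $Y_{v_{k+1}}$.

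Hence the real content is to show that $T_k$ is never contained in $Y_{v_{k+1}}$. I would not attempt this with the bare induction hypothesis ``$T_k\neq\varnothing$''. Note that the naive strengthening ``some valid signing of $v_1,\dots,v_k$ ends with $\|s_k\|\le\rho$'' cannot work for an absolute $\rho<2$ over all planar norms: one would need $\rho\le1$ to stay clear of all obstruction sets (e.g.\ $Y_v$ for the $\ell_\infty$ norm contains points of norm arbitrarily close to $1$), yet even $\rho=1$ already fails in the Euclidean plane for $v_1=e_1,v_2=e_2$. So one is forced to track more than a single point: the invariant I would carry is a statement about the \emph{shape} of $T_k$, roughly that $T_k$ always contains a centrally symmetric, suitably ``spread‑out'' sub‑configuration of reachable points that cannot be engulfed by any two antipodal caps of the form $Y_v$, and then to verify that this property survives the update $T\mapsto\bigl((T+v)\cup(T-v)\bigr)\cap 2B$. (A superficially simpler route—processing the $v_i$ two at a time so as to keep the even‑indexed partial sums small—does not seem to survive uniformly over all planar norms, which is why this more global bookkeeping appears necessary.)

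The main obstacle, and presumably where the planar geometry is really used, is twofold: choosing the invariant so that it is strong enough to contradict $T_k\subseteq Y_{v_{k+1}}$ yet robust under the update, and establishing the accompanying quantitative estimate that makes ``thin antipodal caps'' precise for an arbitrary symmetric convex body $2B$ and an arbitrary $v\in B$. I expect the verification to split into two regimes. When the current reachable set meets a fixed ``deep interior'' region $\{\|p\|\le\rho\}$, any sign of the next vector is safe and the shape invariant is trivial to re‑establish. When it does not, one must exploit that merely \emph{reaching} a point of a cap $Y_{v_{k+1}}$ already constrained the earlier sign choices, so that a surviving ``sibling'' configuration is pinned to the complementary, safe arc of $\partial(2B)$, transverse to the previously dangerous direction; assembling these two regimes yields $T_n\neq\varnothing$, i.e.\ $SS(L)\le 2$. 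The same scheme, with $2$ replaced by $\sqrt3$ throughout and using the Euclidean‑specific sharpening $Y_v\subseteq\{\,\|p\|>\sqrt3\,\}$ (which quantitatively shrinks the caps), should be what upgrades the bound to $\sqrt3$ for the Euclidean plane.
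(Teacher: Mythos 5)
Your write-up is a strategy outline rather than a proof, and the part you defer is exactly where all the content lies. You correctly reformulate the theorem as $T_n\neq\varnothing$ for the reachable sets $T_k=\bigl((T_{k-1}+v_k)\cup(T_{k-1}-v_k)\bigr)\cap 2B$, you correctly observe that the bare hypothesis $T_k\neq\varnothing$ (or ``some reachable point has norm $\le\rho$'') cannot be propagated, and you correctly guess that one must track a centrally symmetric sub-configuration of $T_k$ that no obstruction set $Y_{v_{k+1}}$ can swallow. But you never say what that invariant is, and you never prove it survives the update; you explicitly list both as ``the main obstacle.'' Without them there is no argument: the claim that $T_k\not\subseteq Y_{v_{k+1}}$ is precisely the theorem in disguise, and the ``thin antipodal caps'' picture is not quantified anywhere (indeed for the $\ell_\infty$ and $\ell_1$ norms the caps are not small, which is why the bound $2$ is tight there). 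The paper resolves exactly this point with a concrete invariant: maintain that the reachable set contains $\Pi(A)=\{\pm$ sums of $A\}$ for an \emph{admissible} set $A$ (all signed $\{-1,0,1\}$-combinations with at least two nonzero terms have norm $>1$). The maintenance step is not a caps estimate at all but a merging argument: if $A\cup\{v\}$ is not admissible, pick a maximal signed combination of norm $\le 1$ involving $v$, replace the vectors it uses by that single combined vector, and check the new set is admissible; this works in every dimension. The planar geometry enters only through a separate short lemma (a linear-dependence computation) showing that in two dimensions an admissible set has at most $2$ elements, whence every tracked point has norm $\le 2$.

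Two further cautions. First, your closing remark that the same scheme with $\sqrt3$ uses $Y_v\subseteq\{\|p\|>\sqrt3\}$ is false: the parallelogram law only gives $\|p\|>\sqrt{2}$ when both $\|p\pm v\|>\sqrt3$ and $\|v\|\le1$ (the paper instead gets $\sqrt3$ by applying the parallelogram law to the two vectors of a Euclidean admissible pair, whose difference has norm $>1$). Second, your invariant would also need to be strengthened so that it certifies not just survival at step $k+1$ but re-establishes itself afterwards; the admissible-set formulation does this automatically, and finding such a self-propagating statement is the idea your proposal is missing.
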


\begin{thm}\label{thm:main-euclidean}
Let $\mathbb{E}^2$ be the real plane with Euclidean norm. Then,
$$SS(\mathbb{E}^2) = \sqrt{3}.$$
\end{thm}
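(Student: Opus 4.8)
The plan is to prove the two inequalities $SS(\mathbb{E}^2)\le\sqrt3$ and $SS(\mathbb{E}^2)\ge\sqrt3$ separately.

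\medskip
\noindent\emph{Upper bound $SS(\mathbb{E}^2)\le\sqrt3$.} I would argue by a parity‑based induction in the spirit of Swanepoel's theorem quoted above. Process the vectors in consecutive pairs, and maintain the invariant that the even‑indexed partial sum $s_{2j}$ lies in a fixed region $K$ with $K\subseteq\sqrt2\,B$; the odd‑indexed partial sums should then automatically come out $\le\sqrt3$, and a possible last vector (when $n$ is odd) is handled directly from $\|s_{n-1}\|^2+\|v_n\|^2\le 2+1=3$. The heart of the matter is a local two–step lemma: for every $p\in K$ and all $a,b\in B$ there are signs $\alpha,\beta\in\{-1,+1\}$ with $\|p+\alpha a\|\le\sqrt3$ and $p+\alpha a+\beta b\in K$. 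The main obstacle here is to pin down the correct $K$: the naive choice $K=\sqrt2\,B$ with the obtuse‑angle greedy rule fails, since from a state of norm $\sqrt2$ a unit vector orthogonal to it is forced to norm $\sqrt3$, and a second orthogonal unit vector would then be forced to norm $2>\sqrt3$. So $K$ must be a genuinely smaller, likely rotating, region (a square or a lens), engineered so that one step lands in an intermediate region on which the Euclidean norm is $\le\sqrt3$ and the second step returns to $K$ — this is exactly where Swanepoel's two‑dimensional geometric argument, rather than the obtuse‑angle trick, has to be invoked or refined.

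\medskip
\noindent\emph{Lower bound $SS(\mathbb{E}^2)\ge\sqrt3$.} It suffices to produce, for every $\epsilon>0$, a finite sequence in $B$ no signing of which keeps every partial sum within $\sqrt3-\epsilon$. I would track the \emph{reachable set} $R_k=\{\sum_{i\le k}\varepsilon_i v_i:\ \|\sum_{i\le\ell}\varepsilon_i v_i\|\le\sqrt3-\epsilon\ \text{for all }\ell\le k\}$; since negating all signs preserves all norms, $R_k=-R_k$, and $R_k=\big((R_{k-1}+v_k)\cup(R_{k-1}-v_k)\big)\cap(\sqrt3-\epsilon)B$. The goal is a sequence with $R_N=\emptyset$. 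The endgame is clean: with $t=\sqrt3-\epsilon$, a point $q$ with $\|q\|\le t$ has $\min_{\pm}\|q\pm(1,0)\|>t$ precisely when $q$ lies in the pinch region $P_t=\{(x,y):\ x^2+y^2\le t^2,\ 2|x|<x^2+y^2+1-t^2\}$, which is two small caps near $(0,\pm y)$ with $y$ slightly above $\sqrt2$; so once $R_k\subseteq P_t$ one appends $v_{k+1}=(1,0)$ and $R_{k+1}=\emptyset$.

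\medskip
\noindent The technical heart — and the step I expect to be the main obstacle — is the \emph{funnel}: a vector sequence driving $R_k$ from $R_1=\{\pm(0,1)\}$ (distance $1$ from the origin) into $P_t$. This is delicate for two reasons: (i) a reachable pair $\{q,-q\}$ cannot be enlarged by a vector parallel to $q$ without spawning a second, strictly smaller reachable pair that can never afterwards be eliminated; and (ii) enlargement must instead be forced by hitting reachable points with (nearly) orthogonal vectors — against which the player has no contracting choice — but this splits the reachable set into directionally separated clusters, so the radius grows only at the cost of angular spread. The construction must balance radial growth against this spreading while keeping $R_k\subseteq(\sqrt3-\epsilon)B$ throughout; I expect it to use many vectors, some of norm strictly less than $1$ for fine control, to be genuinely multi‑phase, and to need $N=N(\epsilon)\to\infty$ as $\epsilon\to0$. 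Once the funnel is constructed, the matching of the two bounds at $\sqrt3$ is immediate.
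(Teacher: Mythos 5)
Both halves of your argument stop exactly at the step that carries the mathematical content, so as it stands this is a plan rather than a proof. For the upper bound, the two-step lemma is never established: you do not exhibit the region $K$, and you yourself note that the natural candidate $K=\sqrt{2}\,B$ fails (it already fails against short vectors: with $p=(\sqrt2,0)$, $a=(0,1)$, $b=(0,\tfrac12)$ every choice of signs pushes the even sum outside $\sqrt2\,B$), so the existence of a suitable smaller or rotating region is precisely what would have to be proved, and it is not clear such a memoryless two-step invariant exists. The paper avoids parity bookkeeping altogether: call a set of vectors of norm at most $1$ \emph{admissible} if every signed subsum with at least two nonzero coefficients has norm $>1$; the sets $\Pi(V)$ of all full signed sums over admissible $V$ form a \emph{trapping family} (a new vector $v$ either extends $V$ to an admissible set or merges with some of its members into a single vector of norm at most $1$), in the plane an admissible set has at most two elements (via a linear dependence $au+bv+cw=\mathbf 0$), and for an admissible pair in $\mathbb{E}^2$ the parallelogram law gives $\|v_1+v_2\|^2=2\|v_1\|^2+2\|v_2\|^2-\|v_1-v_2\|^2<3$. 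That yields $SS(\mathbb{E}^2)\le\sqrt3$ with no invariant region to engineer.

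For the lower bound your reachable-set framework is a reasonable way to think about the problem, but the ``funnel'' you flag as the main obstacle \emph{is} the construction, and it is missing; it is also much simpler than you anticipate, since one never needs the reachable set to grow beyond a single antipodal pair. Take $v_1=(0,1)$ and set $x_k=-(v_1+\cdots+v_k)$; as long as $1\le\|x_k\|<\sqrt2$, choose the unit vector $v_{k+1}$ with $\|x_k+v_{k+1}\|=\sqrt3-\delta$ exactly. The parallelogram law gives $\|x_k-v_{k+1}\|^2=2\|x_k\|^2-1+2\sqrt3\,\delta-\delta^2>(\|x_k\|+\delta)^2$ for $\delta<\sqrt3-\sqrt2$, so $\|x_k-v_{k+1}\|>\|x_k\|+\delta$. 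Hence (normalizing $\varepsilon_1=-1$) any signing that ever deviates from the all-minus pattern reaches norm $\ge\sqrt3-\delta$ at its first deviation, while the all-minus signing grows by at least $\delta$ per step until its norm exceeds $\sqrt2$, at which point one final unit vector perpendicular to it forces norm $\ge\sqrt3$. This sidesteps both of your worries (i) and (ii): no parallel pushes and no angular spreading occur, because the ``wrong'' branch is eliminated by landing it exactly on the threshold. Until you supply either a working region $K$ (or adopt the admissible-set argument) and such an explicit escalating sequence, neither inequality is proved.
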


The upper bound of $SS(L) \leq 2$ is easily seen to be tight for the max norm and for the $l_1$ norm.
For instance, the vector sequence $(1,0), (0,1)$ shows that Theorem \ref{thm:main-arbitrary} is tight for the $l_1$ norm; since $SS(L)$ is affinely invariant, and the $l_1$ and max norms are affinely eqivalent in the plane, tightness for the max norm follows.
The construction that shows that $SS(\mathbb{E}^2) \geq \sqrt{3}$ is new, and presented in section \ref{sec:construction}.

It is an open question whether a near-converse of Chobanyan's theorem holds; in particular, does there exist a constant $C$ such that $SS(L) \leq C \cdot S(L)$?
Since $S(\mathbb{E}^2) = \sqrt{5}/2$, Theorem \ref{thm:main-euclidean} shows that this converse cannot be true with $C = 1$.

In the proofs of Theorems \ref{thm:main-arbitrary} and \ref{thm:main-euclidean}, we introduce two new constructions that may have further interest: \textit{trapping families} and \textit{admissible sets}.
While it might be possible to use trapping families to make progress on the problem of showing that $SS(\mathbb{E}^n)$ has a sublinear dependence on $n$, we show in section \ref{sec:higherDims} that trapping families derived from admissible sets (which give the tight bound in the plane) cannot be used to to prove sublinear dependence on dimension.

\section{Trapping families}

Let $\mathcal F$ be a family of finite non-empty centrally symmetric sets in $L$. Assume that the following
two conditions hold.
\begin{enumerate}
	\item $\{ \mathbf 0 \} \in \mathcal F$.
	\item For every $F \in \mathcal F$ and every vector $v \in L$ with $\| v \| \leq 1$, there exists a set $F' \in \mathcal F$ such that
				$$(F + v) \cup (F - v) \supseteq F'.$$
\end{enumerate}
Then the family $\mathcal F$ will be called {\it trapping}.

\begin{lem}
Let $\mathcal F$ be a trapping family. Let
$$r(\mathcal F) = \sup\limits_{F \in \mathcal F} \sup\limits_{v \in F} \| v \|.$$
Then $SS(L) \leq r(\mathcal F)$.
\end{lem}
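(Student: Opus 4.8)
The plan is to prove the contrapositive-style statement directly: given any sequence $v_1, \ldots, v_n \in B$, I will produce signs $\varepsilon_1, \ldots, \varepsilon_n$ so that every signed partial sum lies in $r(\mathcal F) \cdot B$. The key idea is to track not a single partial sum but an entire \emph{set} of possible partial sums obtainable from a cleverly chosen subfamily of sign choices, and to keep this set inside some member of $\mathcal F$ at all times. Concretely, I would build a decreasing (in an appropriate sense) sequence of sets $S_0, S_1, \ldots, S_n$ together with members $F_0, F_1, \ldots, F_n \in \mathcal F$ such that $S_k$ is a translate of $F_k$ (or is contained in a translate of $F_k$) and such that every point of $S_k$ is a valid signed partial sum $\varepsilon_1 v_1 + \cdots + \varepsilon_k v_k$ for some common prefix of signs — more precisely, $S_k$ should be the set of all $\varepsilon_1 v_1 + \cdots + \varepsilon_k v_k$ for sign choices consistent with a fixed "decision tree" that we prune as we go.

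Here is the induction in more detail. Start with $S_0 = \{\mathbf 0\} = F_0 \in \mathcal F$. Suppose at step $k$ we have a set $S_k$ which is a translate $c_k + F_k$ of some $F_k \in \mathcal F$, where $c_k$ is the "center" of $S_k$, and every element of $S_k$ is realized as a signed partial sum of $v_1, \ldots, v_k$. Now I want to incorporate $v_{k+1}$. The natural move is to consider $(S_k + v_{k+1}) \cup (S_k - v_{k+1})$: every point here is a signed partial sum of $v_1, \ldots, v_{k+1}$, obtained by appending $\varepsilon_{k+1} = +1$ or $-1$ to one of the prefixes realizing $S_k$. Writing $S_k = c_k + F_k$, this union equals $c_k + \big((F_k + v_{k+1}) \cup (F_k - v_{k+1})\big)$. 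Since $\|v_{k+1}\| \le 1$, the trapping property (condition 2) gives an $F_{k+1} \in \mathcal F$ with $F_{k+1} \subseteq (F_k + v_{k+1}) \cup (F_k - v_{k+1})$. Set $S_{k+1} = c_{k+1} + F_{k+1}$ where $c_{k+1}$ is chosen so that $c_{k+1} + F_{k+1} \subseteq c_k + \big((F_k + v_{k+1}) \cup (F_k - v_{k+1})\big)$ — the simplest choice is $c_{k+1} = c_k$ if $F_{k+1}$ sits inside the shifted union as given, but one must be careful that the containment from condition 2 is about the set $(F_k + v_{k+1}) \cup (F_k - v_{k+1})$ itself, which is already centered at $\mathbf 0$, so $S_{k+1} = c_k + F_{k+1}$ works. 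Crucially, every element of $S_{k+1}$ lies in $(S_k + v_{k+1}) \cup (S_k - v_{k+1})$, hence is a signed partial sum of $v_1, \ldots, v_{k+1}$ extending a prefix used for $S_k$. Continuing to $k = n$, we pick any point $p \in S_n$; it is realized by some sign sequence $\varepsilon_1, \ldots, \varepsilon_n$. By construction each intermediate partial sum for this very sequence lies in the corresponding $S_k$, and since $F_k \in \mathcal F$, each $S_k \subseteq c_k + r(\mathcal F) B$.

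The remaining — and only subtle — point is bounding $\|c_k\|$, i.e. controlling the center drift, so that $S_k$ actually lies in $r(\mathcal F) B$ rather than merely in some translate of it. The cleanest fix is to observe that $c_k = \mathbf 0$ for all $k$: in the step above, $(F_k + v_{k+1}) \cup (F_k - v_{k+1})$ is centrally symmetric (a union of a centrally symmetric set translated by $\pm v$), all elements of $\mathcal F$ are centrally symmetric, and condition 2 produces $F_{k+1}$ as a subset of this symmetric union with no translation, so we may take every $S_k = F_k$ with center at the origin. Then $S_k = F_k \in \mathcal F$, so $\sup_{v \in S_k}\|v\| \le r(\mathcal F)$ by definition of $r(\mathcal F)$, and in particular the chosen realizing partial sums all have norm $\le r(\mathcal F)$. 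I expect the main obstacle — really a bookkeeping obstacle rather than a conceptual one — is making precise the claim that each point of $S_{k+1}$ genuinely corresponds to a signed partial sum whose \emph{prefix} was tracked in $S_k$; this requires phrasing the induction hypothesis so that it carries the statement "$S_k \subseteq \{\varepsilon_1 v_1 + \cdots + \varepsilon_k v_k : \varepsilon \in \{-1,+1\}^k\}$ and for each $p \in S_k$ there is such an $\varepsilon$ with all initial partial sums $\varepsilon_1 v_1 + \cdots + \varepsilon_j v_j \in S_j$ for $j \le k$," and then checking this is preserved. One also needs the finiteness/non-emptiness of the sets in $\mathcal F$ to guarantee $S_n \neq \emptyset$ so that a realizing sign sequence exists, and to know $r(\mathcal F)$ is a genuine (possibly infinite, but then the bound is vacuous) supremum. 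If $r(\mathcal F) = \infty$ the statement is trivial, so we may assume it is finite.
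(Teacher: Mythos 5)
Your proposal is correct and follows essentially the same argument as the paper: build $F_0=\{\mathbf 0\}$, use the trapping property to get $F_{k+1}\subseteq (F_k+v_{k+1})\cup(F_k-v_{k+1})$, and carry the induction hypothesis that every element of $F_k$ is realized by a sign sequence whose initial partial sums lie in the earlier $F_j$'s. The detour through translates $c_k+F_k$ is unnecessary, as you yourself observe ($c_k=\mathbf 0$ throughout), and once that is stripped away your argument coincides with the paper's proof.
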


\begin{proof}
Let $v_1, v_2, \ldots, v_n$ be a sequence of vectors such that $\| v_i \| \leq 1$. Construct a sequence
$$F_0, F_1, \ldots, F_n \qquad (F_i \in \mathcal F)$$
inductively as follows. Set $F_0 = \{ \mathbf 0 \}$. If $F_i$ is constructed, choose $F_{i + 1}$ so that
$$(F_i + v_{i + 1}) \cup (F_i - v_{i + 1}) \supseteq F_{i + 1}.$$
This is always possible, since $\mathcal F$ is a trapping family.

Now we prove the following claim. For every $i$ ($0 \leq i \leq n$) and every $f \in F_i$ there is an identity
$$f = \varepsilon_1 v_1 +\varepsilon_2 v_2 + \ldots + \varepsilon_i v_i,$$
with $\varepsilon_1, \varepsilon_2, \ldots, \varepsilon_i \in \{-1, +1\}$,
such that every $j$-th ($0 \leq j < i$)  partial sum of the right-hand part belongs to $F_j$.

To prove the claim we proceed by induction over $i$. The case $i = 0$ is clear.

Let $0 < i \leq n$. For every $f \in F_i$ we have $f = f' + \varepsilon_i v_i$, where $f' \in F_{i - 1}$.
By the inductive assumption for $i - 1$, we have
$$f' = \varepsilon_1 v_1 + \varepsilon_2 v_2 + \ldots + \varepsilon_{i-1} v_{i - 1},$$
so that every $j$-th ($0 \leq j < i - 1$) partial sum of the right-hand part belongs to $F_j$. Adding $\varepsilon_i v_i$,
we get
$$f = \varepsilon_1 v_1 + \varepsilon_2 v_2 + \ldots + \varepsilon_i v_i.$$
Every $j$-th ($0 \leq j < i - 1$) partial sum of the right-hand part is unchanged and therefore belongs to $F_j$.
The $(i - 1)$-th partial sum is $f' \in F_{i - 1}$. Hence the claim is proved for $i$.

The statement of lemma immediately follows from the claim for $i = n$.

\end{proof}

\section{Trapping families of a special type}

We call a set of vectors $v_1, v_2, \ldots, v_k \in L$ {\it admissible}, if $\|v_i\| \leq 1$, and
$$\| \varepsilon_1 v_1 + \varepsilon_2 v_2 + \ldots + \varepsilon_k v_k \| > 1$$
for every sequence of coefficients $\varepsilon_i \in \{ -1, 0, 1 \}$ with at least two non-zero terms. An empty set is always admissible.

If $v_1, v_2, \ldots, v_k \in L$, we will write $\Pi(\{ v_1, v_2, \ldots, v_k \})$ for the set of all linear combinations
$$\pm v_1 \pm v_2 \pm \ldots \pm v_k$$
(for all possible sign patterns). By definition, set $\Pi(\varnothing) = \{ \mathbf 0 \}$.

\begin{lem}
Let $\mathcal A(L)$ be the family of all admissible sets in $L$. Define
$$\mathcal F(L) = \left\{ \Pi(\{ v_1, v_2, \ldots, v_k \}) : \{ v_1, v_2, \ldots, v_k \} \in \mathcal A(L) \right\}.$$
Then $\mathcal F(L)$ is a trapping family.
\end{lem}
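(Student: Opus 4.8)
The plan is to verify the two defining conditions of a trapping family directly. Condition~1 is immediate, since the empty set is admissible and $\Pi(\varnothing) = \{\mathbf 0\}$, so $\{\mathbf 0\} \in \mathcal F(L)$. All the work is in condition~2. Fix $F = \Pi(A) \in \mathcal F(L)$ with $A = \{a_1, \dots, a_k\}$ admissible, and fix $v$ with $\|v\| \le 1$. I would first record the elementary set identity
$$(F + v) \cup (F - v) \;=\; \bigl\{\, \varepsilon_1 a_1 + \dots + \varepsilon_k a_k + \varepsilon v \;:\; \varepsilon_1, \dots, \varepsilon_k, \varepsilon \in \{-1, +1\} \,\bigr\} \;=:\; P ,$$
which follows at once from $F = \{\, \varepsilon_1 a_1 + \dots + \varepsilon_k a_k : \varepsilon_i \in \{-1,+1\} \,\}$. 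Hence it suffices to produce an \emph{admissible} set $A'$ with $\Pi(A') \subseteq P$; then $F' := \Pi(A') \in \mathcal F(L)$ is the set required by condition~2.

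The construction of $A'$ is the heart of the argument. Among all subsets $S \subseteq \{1, \dots, k\}$ for which there exist signs $\varepsilon_i \in \{-1, +1\}$, $i \in S$, with $\bigl\| v + \sum_{i \in S} \varepsilon_i a_i \bigr\| \le 1$, I would choose one of \emph{maximum cardinality}; such a choice exists, as this collection of subsets is finite and nonempty ($S = \varnothing$ qualifies because $\|v\| \le 1$). Let $w = v + \sum_{i \in S} \varepsilon_i a_i$ be the associated point, and put
$$A' \;=\; \{w\} \cup \{\, a_i : i \notin S \,\} ,$$
with $w$ discarded from $A'$ in the exceptional case $w = \mathbf 0$.

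It then remains to check two things. The inclusion $\Pi(A') \subseteq P$ is routine: a generic element $\delta_0 w + \sum_{i \notin S} \delta_i a_i$ of $\Pi(A')$, with all $\delta_j \in \{-1,+1\}$, turns into $\delta_0 v + \sum_{i \in S}(\delta_0 \varepsilon_i) a_i + \sum_{i \notin S} \delta_i a_i$ after substituting for $w$, and this lies in $P$. For admissibility of $A'$, take any combination $\delta_0 w + \sum_{i \notin S} \delta_i a_i$ with $\delta_j \in \{-1, 0, 1\}$ and at least two nonzero coefficients. If $\delta_0 = 0$ this is a combination of $A$ with at least two nonzero coefficients, so its norm exceeds $1$ because $A$ is admissible. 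If $\delta_0 \ne 0$, after a global sign change we may take $\delta_0 = 1$; then at least one $\delta_i$ with $i \notin S$ is nonzero, and substituting for $w$ expresses the combination as $v + \sum_{i \in S^{\ast}} \mu_i a_i$ with every $\mu_i \in \{-1,+1\}$ and $S^{\ast} \supsetneq S$, so its norm exceeds $1$ by the maximality of $S$. The same maximality also guarantees that $A'$ is an honest admissible set: if $w = \pm a_j$ for some $j \notin S$ then $S \cup \{j\}$ would be a qualifying subset (the corresponding combination $v + \sum_{i \in S}\varepsilon_i a_i \mp a_j$ equals $\mathbf 0$), and if $w = \mathbf 0$ with $S \subsetneq \{1, \dots, k\}$ then $S \cup \{j\}$ would qualify for every $j \notin S$ (that combination equals $\pm a_j$); in the leftover degenerate case $A' = \varnothing$ and $\Pi(A') = \{\mathbf 0\} \subseteq P$.

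The step I expect to be the main obstacle is precisely this verification that $A'$ is \emph{admissible}, rather than merely that $\Pi(A')$ sits inside $P$. A naive reduction that repeatedly merges the vectors appearing in an offending short combination into a single new vector runs into trouble when the new vector coincides with an existing one (or its negative, or $\mathbf 0$), and does not obviously terminate; passing to a subset $S$ of maximum size is the device that rules out all such coincidences and short combinations simultaneously, which is why I would organize the proof around it from the start.
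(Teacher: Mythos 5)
Your proof is correct and follows essentially the same route as the paper: the paper likewise takes a norm-at-most-$1$ combination $v+\varepsilon_1 v_1+\dots+\varepsilon_m v_m$ with the greatest possible number of non-zero terms and replaces the participating vectors by this single new vector, the case $S=\varnothing$ corresponding to the paper's separate ``already admissible'' case. Your extra checks that the merged vector cannot coincide with $\pm a_j$ or with $\mathbf 0$ (except when $S=\{1,\dots,k\}$) are points the paper leaves implicit, but they follow from the same maximality device, so the arguments are essentially identical.
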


\begin{proof}
Let $F \in \mathcal F(L)$. Then $F = \Pi(\{ v_1, v_2, \ldots, v_k \})$, where $\{ v_1, v_2, \ldots, v_k \}$ is an admissible set of vectors.

Let $\| v \| \leq 1$. Then there are two cases.

\noindent {\bf Case 1.} The set $\{ v_1, v_2, \ldots, v_k, v \}$ is admissible. Then
$$(F + v) \cup (F - v) = \Pi(\{ v_1, v_2, \ldots, v_k, v \}) \in \mathcal F(L).$$

\noindent {\bf Case 2.} The set $\{ v_1, v_2, \ldots, v_k, v \}$ is not admissible. Then
$$\| v + \varepsilon_1 v_1 + \varepsilon_2 v_2 + \ldots + \varepsilon_k v_k \| \leq 1$$
for some choice of $\varepsilon_i \in \{ -1, 0, 1 \}$. Consider such a combination with the greatest possible number of non-zero terms.

Without loss of generality, we can assume that the expression is
$$\| v + v_1 + v_2 + \ldots + v_m \| \leq 1.$$
Indeed, we may permute $v_i$'s as well as replace any $v_i$ with $-v_i$ and $v$ with $-v$.

Then the set $\{ v + v_1 + v_2 + \ldots + v_m, v_{m + 1}, v_{m + 2}, \ldots, v_k \}$ is admissible and
$$(F + v) \cup (F - v) \supseteq \Pi(\{ v + v_1 + v_2 + \ldots + v_m, v_{m + 1}, v_{m + 2}, \ldots, v_k \}),$$
where
$$\Pi(\{ v + v_1 + v_2 + \ldots + v_m, v_{m + 1}, v_{m + 2}, \ldots, v_k \}) \in \mathcal F(L).$$

\end{proof}

\section{Two-dimensional admissible sets}

We show that admissibility in the two-dimensional space is a restrictive condition for set of vectors. For instance, the
cardinality of such a set is at most 2.
Once this is established, the upper bounds of Theorems ~\ref{thm:main-arbitrary} and \ref{thm:main-euclidean} follow easily.

\begin{lem}\label{lem:2d-admissible}
Let $L$ be a two-dimensional normed linear space. If $V$ is an admissible set of vectors in $L$, then $|V| \leq 2$.
\end{lem}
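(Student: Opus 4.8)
The plan is to argue by contradiction: suppose $V$ contains three vectors $v_1, v_2, v_3$, all of norm at most $1$, such that every $\pm$-combination of at least two of them has norm strictly greater than $1$. The key structural fact I would try to exploit is that in the plane $\mathrm{conv}(\pm v_1, \pm v_2, \pm v_3)$ is a centrally symmetric hexagon (or a degenerate one), and the admissibility hypothesis forces all six points $\pm v_1 \pm v_2$, $\pm v_1 \pm v_3$, $\pm v_2 \pm v_3$ to lie strictly outside $B$, while $v_1, v_2, v_3 \in B$. I would first dispose of degenerate configurations — when two of the $v_i$ are parallel, one of $v_i + v_j$ or $v_i - v_j$ is a scalar multiple of the third direction with norm $\le 1$ only in easily-excluded cases, and when all three are parallel the claim is immediate from the one-dimensional picture. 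So assume $v_1, v_2, v_3$ are pairwise non-parallel.

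Next, the main idea: among the three vectors, by a pigeonhole/angular argument two of them, say after relabeling and sign changes $v_1$ and $v_2$, should be "close" in the sense that $v_1 - v_2$ is short. Concretely, I would order $v_1, v_2, v_3$ and their negatives by angle around the origin; the six rays $\pm v_1, \pm v_2, \pm v_3$ partition the circle, and some pair of consecutive rays (which, after replacing $v_i$ by $-v_i$ as needed, we may take to be genuine representatives $v_i, v_j$ rather than one of each) spans an angle at most $60^\circ$. For that pair I want to show $\|v_i - v_j\| \le 1$, contradicting admissibility. This is where the normed-space subtlety enters: in the Euclidean case $\|v_i - v_j\|^2 = \|v_i\|^2 + \|v_j\|^2 - 2\langle v_i, v_j\rangle \le 2 - 2\cos 60^\circ = 1$, but for a general norm I cannot use inner products. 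Instead I would use convexity of $B$: since $v_i, v_j, -v_i, -v_j \in B$ and $v_i - v_j$ lies in the convex hull of these four points together with appropriate points, one can bound $\|v_i - v_j\|$ — but getting exactly $\le 1$ from an angular bound alone is false for general norms, so the "angle $\le 60^\circ$" heuristic must be replaced by a genuinely affine statement.

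The correct affine formulation, which I would pursue instead, is this: consider the hexagon $H = \mathrm{conv}(\pm v_1, \pm v_2, \pm v_3)$. Its vertices are (some subset of) $\pm v_1, \pm v_2, \pm v_3$. Pick a pair of vertices, say $v_1$ and $v_2$ (relabeling/re-signing), that are \emph{adjacent} on $\partial H$ with $-v_3$ on the short arc between them — more precisely, so that $v_1, v_2$ are the two neighbors of the edge of $H$ "cut off" near $-v_3$. Then $-v_3 \in \mathrm{conv}(v_1, v_2, -v_1, -v_2)$ fails in general, but one can show $v_3$ lies in the cone spanned by $v_1$ and $v_2$, and then the segment from $v_1$ to $v_2$ crosses the ray through $v_3$ at a point $p$ with $\|p\| \le 1$ (convexity), so $v_3 = \lambda p$ with $\lambda \le 1/\|p\|$... this still doesn't immediately give a short difference. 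The cleanest route, and the one I expect the authors take, is: among $v_1, v_2, v_3$ two of them, say $v_1, v_2$, have the property that $v_3$ (or $-v_3$) lies in the triangle $\mathrm{conv}(\mathbf 0, v_1, v_2)$ or its reflection, forcing $v_1 - v_2$ (a diagonal-type combination) to be expressible so that $\|v_1 - v_2\| \le \max(\|v_1\|, \|v_2\|, \ldots)$ through a barycentric estimate — and the main obstacle is precisely pinning down which pair and which signs make this work uniformly, i.e., the combinatorial/case analysis of how three centrally symmetric pairs of points can sit in the plane. I would therefore structure the proof as: (1) reduce to pairwise non-parallel; (2) by central symmetry and a rotation argument choose representatives $v_1, v_2, v_3$ with arguments in an open half-plane, ordered by angle; (3) show $v_2 \in \mathrm{conv}(\mathbf 0, v_1, v_3)$ forces one of $\|v_1 - v_2\|, \|v_3 - v_2\| \le 1$ by writing $v_2$ as a convex combination and using that $B$ contains $v_1, v_3, -v_1, -v_3, \mathbf 0$; (4) conclude the contradiction with admissibility. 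The hard part will be step (3) — making the barycentric/convexity estimate actually yield the bound $1$ rather than something weaker — and I suspect it needs the sharper observation that $v_2$ lies on the \emph{segment} $[v_1, v_3]$ direction in a way that lets the short diagonal of the hexagon be controlled.
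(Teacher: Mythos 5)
Your proposal is an outline rather than a proof, and the decisive step is exactly the one you leave open. Two concrete problems. First, your step (2)--(3) reduction is not correct as stated: after choosing representatives in a half-plane and ordering by angle, the middle vector $v_2$ lies in the \emph{cone} spanned by $v_1$ and $v_3$, but not in general in $\conv(\mathbf 0, v_1, v_3)$ (e.g.\ $v_1=(1,0)$, $v_3=(0,1)$, $v_2=(0.7,0.7)$ in the Euclidean plane: here $v_2=0.7v_1+0.7v_3$, and the coefficients sum to $1.4$). Second, even granting the convex-hull hypothesis, you never establish the estimate in step (3); you explicitly flag it as ``the hard part'' and suspect a sharper observation is needed. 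So as written the argument does not close, and your own worry about the $60^\circ$ heuristic failing for general norms is justified.

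What actually closes the gap --- and is the paper's entire proof --- is to work directly with the linear dependence rather than with convex hulls or angles. Given a putative admissible triple $\{u,v,w\}$, take a nontrivial relation $au+bv+cw=\mathbf 0$ and, using that permutations and sign flips preserve admissibility, normalize so that $1=a\ge b\ge c\ge 0$. Then
$$u+v=u+v-(au+bv+cw)=(1-b)v-cw,$$
so $\|u+v\|\le (1-b)+c\le 1$ since $c\le b$, contradicting admissibility. Note this is precisely the affine statement you were groping toward: in your parametrization $v_2=\alpha v_1+\beta v_3$ with $\alpha,\beta\ge 0$, the right move is not to demand $\alpha+\beta\le 1$ but to rescale the relation so its largest coefficient equals $1$ and attach the ``$u$'' role to the corresponding vector; then the same one-line triangle-inequality computation gives a pairwise $\pm$ combination of norm at most $1$. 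With that normalization your degenerate/parallel case analysis also becomes unnecessary, since the argument never uses non-degeneracy, only the existence of a dependence among three vectors in the plane.
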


\begin{proof}
Since a subset of an admissible set is admissible, it is enough to prove that there is no 3-element admissible set.

Assume that there exists a 3-element admissible set $\{u, v, w\}$. There is a non-trivial linear dependence between $u$, $v$ and $w$:
$$au + bv + cw = \mathbf 0.$$
Since we can permute $u$, $w$ and $w$, and also change signs of any of them without affecting admissibility, we can assume that
$$1 = a \geq b \geq c \geq 0.$$

We claim that $\| u + v \| \leq 1$. Indeed,
$$u + v = u + v - (au + bv + cw) = (1 - b)v - cw.$$
But $1 - b \geq 0$, $c \geq 0$. Hence
$$\| (1 - b)v - cw \| \leq |1-b| \, \|v\| + |c| \, \|w\| \leq 1 - b + c \leq 1.$$
This is a contradiction, because admissibility requires $\| u + v \| > 1$.

\end{proof}

Theorem~\ref{thm:main-arbitrary} follows immediately, since each member of $\mathcal F(L)$ is the sum of the vectors in an admissible set.
In order to prove Theorem ~\ref{thm:main-euclidean}, we will show that the Euclidean norm of the sum of two vectors in an admissible set is less than $\sqrt{3}$.

Indeed, let $\{v_1, v_2\} \in \mathcal A(\mathbb{E}^2)$.
By the parallelogram equality,
$$
\| v_1 + v_2 \|^2 = 2 \|v_1\|^2 + 2 \|v_2 \|^2 - \|v_1 - v_2\|^2 < 3.
$$

\section{Lower bound on $SS(\mathbb{E}^2)$}\label{sec:construction}

We describe a family of sets of vectors in $\mathbb{E}^2$ that shows $SS(\mathbb E^2) \geq \sqrt{3}$.

\begin{lem}
For any $\delta > 0$, there is a natural number $n = n(\delta)$ and a sequence of vectors $v_1, \ldots, v_n \in \mathbb{E}^2$ with $\| v_i \| \leq 1$, such that for any sequence of signs $\varepsilon_1, \ldots, \varepsilon_n \in \{-1,+1\}$, there is a partial sum $\|\varepsilon_1 v_1 + \ldots + \varepsilon_k v_k\| \geq \sqrt{3} - \delta$.
\end{lem}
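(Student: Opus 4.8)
The plan is to build the sequence $v_1, \ldots, v_n$ in two phases. The first phase forces the partial sum into a small neighbourhood of one of two antipodal points on the circle of radius $\sqrt 3$ minus a controlled error; the second phase is a long block of tiny vectors that ``fills in'' the gap, ensuring that no matter how the signs are chosen the trajectory must at some step be within $\delta$ of distance $\sqrt 3$ from the origin. The guiding picture is the equality case in the admissibility bound just derived: two unit vectors $v_1, v_2$ with $\|v_1 - v_2\| = 1$ give $\|v_1 + v_2\| = \sqrt 3$, i.e.\ the two unit vectors at angle $60^\circ$ apart, whose sum has length $\sqrt 3$. The construction should be a sequence that ``simulates'' repeatedly trying to realize such a configuration so that the adversary choosing signs cannot avoid it.

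Concretely, I would proceed as follows. Fix a large integer $m$ and a small step length $h = 1/m$. First I would use a short initial segment of unit vectors to show that after an even number of steps the partial sum can be pushed away from $0$: more precisely, I claim that for any choice of signs on a suitable gadget, the partial sum lands near radius $1$ in a prescribed cone (this uses the same averaging idea as Swanepoel's even-index bound, but run in reverse — we \emph{want} the sum to be large). Then I would append a long run of $N$ copies of vectors of norm $h$ arranged so that the signed partial sums trace out, up to error $O(h)$, a continuous path; by a discrete intermediate-value argument the radius $\|\varepsilon_1 v_1 + \cdots + \varepsilon_k v_k\|$ changes by at most $2h$ between consecutive steps, so if the path starts at radius $\le \sqrt 3 - \delta$ and is forced at some later point to have radius $\ge \sqrt 3 - \delta/2$, there is an intermediate $k$ with radius in $[\sqrt 3 - \delta, \sqrt 3 - \delta/2]$ — but to get $\ge \sqrt 3 - \delta$ I instead want to force the path \emph{past} $\sqrt 3 - \delta$, which the first phase is designed to do. Choosing $h$ small enough in terms of $\delta$, and $n$ large enough, makes all error terms below $\delta/2$.

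The main obstacle is the combinatorial heart of the first phase: showing that \emph{every} sign pattern on the gadget produces a partial sum of norm close to $\sqrt 3$, rather than merely that \emph{some} pattern does. The natural device is a potential/continuity argument over the ``tree'' of sign choices, or equivalently to exploit that the adversary's only way to keep all partial sums small is to keep the running sum in the intersection of balls of radius $r$ centred at the two possible next positions — and by the admissibility analysis (Lemma~\ref{lem:2d-admissible} and the parallelogram computation), once $r < \sqrt 3$ this intersection geometry is too rigid: there is a direction in which the adversary is repeatedly forced to move, and after enough steps the accumulated displacement in that direction has norm approaching $\sqrt 3$. I would make this quantitative by tracking the inner product of the partial sum with a fixed unit vector $e$ (chosen as the ``forced'' direction): each step changes $\langle S_k, e\rangle$ by $\pm\langle v_{k+1}, e\rangle$, and I would arrange the $v_i$ so that whenever $\|S_k\|$ is still below $\sqrt 3 - \delta$, the sign keeping $\|S_{k+1}\| \le \sqrt 3 - \delta$ is forced and increases $\langle S_k, e \rangle$ by a definite amount; since $\langle S_k, e\rangle \le \|S_k\|$ stays bounded, this can happen only boundedly often, so eventually $\|S_k\| \ge \sqrt 3 - \delta$. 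Assembling these estimates — choosing the cone angle, $h$, $m$, $N$, and hence $n(\delta)$ — is routine once the forced-direction mechanism is pinned down, so I would present that mechanism in full detail and relegate the bookkeeping to a paragraph of inequalities.
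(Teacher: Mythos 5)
There is a genuine gap, and it sits exactly where you flag it: the ``gadget'' of your first phase, which carries the entire burden of the lemma, is never constructed, and the mechanism you propose for it does not work as stated. You want a fixed sequence $v_1,\ldots,v_n$ and a fixed direction $e$ such that whenever $\|S_k\|\le \sqrt3-\delta$, only one sign keeps $\|S_{k+1}\|\le\sqrt3-\delta$ and that forced sign advances $\langle S_k,e\rangle$ by a definite amount. But the set of positions the adversary can occupy at step $k$ is in general a large, spread-out set of points in the disk of radius $\sqrt3-\delta$; in particular the adversary may sit near the origin, where $\|S_k\pm v_{k+1}\|\le \|S_k\|+1$ is comfortably below $\sqrt3-\delta$ for \emph{both} signs, so nothing is forced and no progress in the direction $e$ is extracted. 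A single vector $v_{k+1}$ cannot punish the ``wrong'' sign from all reachable positions simultaneously, and no choice of vectors avoiding this is exhibited. The second phase does not repair this: the lemma only asks for \emph{one} partial sum of norm at least $\sqrt3-\delta$, so once the trajectory has been ``forced past'' $\sqrt3-\delta$ you are already done, and the block of tiny vectors with the discrete intermediate-value argument is superfluous (as you half-notice mid-paragraph); moreover tiny steps of norm $h$ let the adversary keep $\|S\|^2$ from growing by more than $h^2$ per step, so they add nothing toward forcing the norm up.

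The missing idea is a reduction that makes it unnecessary to control every reachable position. Since flipping all signs preserves every partial-sum norm, one may assume $\varepsilon_1=-1$; then it suffices to control the single ``all-minus'' trajectory $x_k=-(v_1+\cdots+v_k)$ together with the \emph{first} index at which the adversary plays $+1$. The paper builds the sequence adaptively against this one trajectory: for $1\le\|x_k\|<\sqrt2$ and $\delta<\sqrt3-\sqrt2$, the parallelogram law gives a unit vector $v_{k+1}$ with $\|x_k+v_{k+1}\|\ge\sqrt3-\delta$ and simultaneously $\|x_k-v_{k+1}\|\ge\|x_k\|+\delta$. If the adversary ever deviates, the partial sum at the first deviation equals $x_{k-1}+v_k$ and already has norm $\ge\sqrt3-\delta$; if the adversary never deviates, $\|x_k\|$ grows by at least $\delta$ per step until it exceeds $\sqrt2$, and one final unit vector perpendicular to it forces norm $\ge\sqrt3$. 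This yields $n=O(1/\delta)$ and closes exactly the hole your ``forced-direction'' potential argument leaves open; without some such reduction, your first phase would have to handle positions where neither sign is forced, and the proposal gives no way to do so.
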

\begin{proof}
If $\sqrt{3} - \delta \leq \sqrt{2}$, the sequence $v_1 = (1,0), v_2 = (0,1)$ works, so we suppose $\delta < \sqrt{3} - \sqrt{2}$.

Given $x$ with $1 \leq \| x \| < \sqrt{2}$, we claim that there is a vector $v$ with $\| v \| = 1$ such that $\|x + v \| \geq \sqrt{3} - \delta$ and $\| x - v \| > \|x\| + \delta$.
Choose $v$ so that $\|x + v \| = \sqrt{3} - \delta$.
By the parallelogram law,
\begin{align*}
 \| x - v \|^2 &= 2\|x\|^2 + 2\|v\|^2 - \|x + v\| ^2, \\
&= 2\|x \|^2 - 1 + 2 \delta \sqrt{3} - \delta^2.
\end{align*}

Using the assumptions that $\|x \|^2 \geq 1$ and $\delta^2 < (\sqrt{3} - \sqrt{2})\delta$, we have
\[\|x - v \|^2 > \|x\|^2 + 2 \sqrt{2} \delta + \delta^2.\]
Since $\|x \| < \sqrt{2}$, this implies that $\|x-v\| > \|x\| + \delta$.

We build $v_1, \ldots, v_n$ inductively as follows.
Let $v_1 = (0,1)$.
Given $v_1, \ldots, v_k$, let $x_k = -(v_1 + \ldots + v_k)$, and choose $v_{k+1}$ by the above procedure so that $\|x_k + v_{k+1}\| \geq \sqrt{3} - \delta$, and $\|x_k - v_{k+1} \| \geq \|x_k\| + \delta$.
If $\|x_{k} - v_{k+1}\| \geq \sqrt{2}$, let $n=k+2$ and choose $v_{n}$ of unit length perpendicular to $x_{k+1}$.
Clearly, $n < 3 + (\sqrt{2} - 1)/\delta$.

Let $\varepsilon_1, \ldots, \varepsilon_n \in \{-1, +1\}$ be a sequence of signs so that $\varepsilon_1 = -1$; we may assume $\varepsilon_1 = -1$ without loss of generality, since flipping all of the signs does not affect the norm of any partial sum.
If there is an $i$ such that $\varepsilon_i = 1$, let $k$ be the first index such that $\varepsilon_k = 1$.
Then $\| \sum_{i \leq k} \varepsilon_i v_i \| \geq \sqrt{3} - \delta$.
If $\varepsilon_i = -1$ for all $i$, then $\| \sum_{i \leq n} \varepsilon_i v_i \| \geq \sqrt{3} > \sqrt{3} - \delta$.

\end{proof}

\section{Admissible sets in higher dimensions}\label{sec:higherDims}

We show that admissible sets cannot be used in the framework of trapping families to show that the sign sequence constant for the Euclidean or max norms has a sub-linear dependence on the dimension.

For $L$ taken to be $\mathbb{R}^d$ with either the max or Euclidean norm, we give an explicit set of vectors $V \in \mathcal A (L)$ such that $\| \sum_{v \in V} v \| \geq c d$ in the relevant norm, for a constant $c>0$ depending on the norm.
Since $\sum_{v \in V}v \in \Pi(V) \in \mathcal F(L)$, this shows that $r(\mathcal F(L)) \geq c d$.

For a vector $v\in \mathbb{R}^d$, denote by $v[i]$ the $i$th coordinate of $v$.

For $1 \leq i \leq d-1$, let $v_i \in \mathbb{R}^d$ be the vector such that $v_i[i] = -1$ and $v_i[j] = 1$ for $j \neq i$.
Then $v_1, \ldots, v_{d-1}$ is an admissible set for $\mathbb{R}^d$ with the max norm, and $\| \sum v_i \| = d-1$.

For $1 \leq i \leq d-1$, let $v_i \in \mathbb{R}^d$ be the vector such that $v_i[1] = 0.2$, $v_i[i + 1] = 0.8$, and $v_i[j] = 0$ for
$j \notin \{1, i + 1\}$.
Then $v_1, \ldots, v_{d-1}$ is an admissible set for $\mathbb{R}^d$ with the Euclidean norm, and $\| \sum v_i \| > 0.2(d - 1)$.

\end{document}